 \newtheorem{theo}{Theorem}[section]
\newtheorem{prop}[theo]{Proposition}
\newtheorem{rema}[theo]{Remark}
\def\bN{{\mathbb N}}
\def\bR{{\mathbb R}}
\def\bS{{\mathbb S}}
\def\NN{{\mathbb N}}
\def\RR{{\mathbb R}}
\def\SS{{\mathbb S}}
\def\e{\eqno}
\def\la{\langle}
\def\ra{\rangle}
\let\a=\alpha
\let\b=\beta
\let\e=\epsilon
\let\lam=\lambda
\let\vp=\varphi
\let\s=\sigma
\let\D=\Delta
\newcommand\re{\mathop{\rm Re}\, }
\let\dis=\displaystyle
\newcommand\cF{\mathcal F}
\newcommand\cS{\mathcal S}
\newcommand\cM{\mathcal M}
\newcommand\cK{\mathcal K}
\let\wt=\widetilde
\def\ve {\mathbf e}
\def\bfe{\mathbf e}
\date{today}
\begin{document}
\title
{A characterization   of  probability measure with finite moment and an application to
the Boltzmann equation  
}
\author[C]{Yong-Kum Cho}
\address[C]{Department of Mathematics, Chung-Ang University, Seoul 156-756, Korea}
\ead[C]{ykcho@cau.ac.kr}
\author[M]{Yoshinori Morimoto }
\address[M]{Graduate School of Human and Environmental Studies,
Kyoto University,
Kyoto, 606-8501, Japan} \ead[M]{morimoto@math.h.kyoto-u.ac.jp}
\author[W]{Shuaikun Wang }
\address[W]{Department of Mathematics, City University of Hong Kong,
Hong Kong, \\P. R. China}
\ead[W]{shuaiwang4-c@my.cityu.edu.hk}
\author[Y]{Tong  Yang \corref{cor1}}
\address[Y]{Department of Mathematics, City University of Hong Kong,
Hong Kong, \\ P. R. China
} 
\ead[Y]{matyang@cityu.edu.hk}


%
%

\begin{keyword}
Boltzmann equation  \sep  characteristic function \sep Fourier transform
 \sep moment  \sep probability measure \sep symmetric difference operator
\end{keyword}

\date{}

\begin{abstract}
We characterize probability measure with finite moment of any order in terms of the symmetric difference 
operators of their Fourier transforms. By using our new characterization, we prove the 
continuity $f(t,v) \in C((0, \infty); L^1_{2k-2 +\a})$, where $f(t, v)$ stands for the density of unique measure-valued solution $(F_t)_{t\ge0}$ of the Cauchy problem for the 
homogeneous non-cutoff Boltzmann equation, with Maxwellian molecules, corresponding to a probability measure
 initial datum $F_0$ satisfying 
\[
\int |v|^{2k-2+\a} dF_0(v) < \infty, \enskip 0\le \a < 2, \enskip k= 2, 3, 4,\cdots,
\]
provided that $F_0$ is not a single Dirac mass.


%

\noindent

\vskip0.3cm 
\noindent
{\it MSC:} {Primary 35Q20, 76P05; secondary  35H20, 82B40, 82C40 }
\end{abstract}
\maketitle

\section{Introduction}\label{s1}

The spatially homogeneous Boltzmann equation states
\begin{equation}\label{bol}
\partial_t f(t,v) 
=Q(f, f)(t,v),
\end{equation}
where $f(t,v)$ is the density distribution of particles with 
velocity $v \in \RR^3$ at time $t$. The most interesting and
important part of this equation is the collision operator
given on the
 right hand side that captures the change rates of the density distribution
through elastic binary collision:
\[
Q(g, f)(v)=\int_{\RR^3}\int_{\mathbb S^{2}}B\big({v-v_*},\sigma
\big)
 \big\{g(v'_*) f(v')-g(v_*)f(v)\big\}d\sigma dv_*\,,
\]
where 
$$
v'=\frac{v+v_*}{2}+\frac{|v-v_*|}{2}\sigma,\,\,\, v'_*
=\frac{v+v_*}{2}-\frac{|v-v_*|}{2}\sigma
$$
for each $\sigma \in \SS^2$, which follows {from} the conservation of momentum and energy,
\[ v' + v_*' = v+ v_*, \enskip |v'|^2 + |v_*'|^2 = |v|^2 + |v_*|^2.
\]

Motivated by some physical models, we assume that the non-negative cross section 
$B$  takes the form
\begin{equation*}
B(|v-v_*|, \cos \theta)=\Phi (|v-v_*|) b(\cos \theta),\,\,\,\,\,
\cos \theta=\frac{v-v_*}{|v-v_*|} \, \cdot\,\sigma\, , \,\,\,
0\leq\theta\leq\frac{\pi}{2},
\end{equation*}
where
\begin{align}
&\Phi(|z|)=\Phi_\gamma(|z|)= |z|^{\gamma} \enskip \mbox{for some $\gamma>-3$ and }   \notag \\
& b(\cos \theta)\theta^{2+\nu}\ \rightarrow K\ \
 \mbox{as} \ \ \theta\rightarrow 0+  \enskip 
\mbox{for some $0<\nu<2$ and $K>0$. }\label{singular-cross}
\end{align}
Throughout this paper, we will only consider the case when
\[
\gamma = 0,\enskip 0<\nu<2\,,
\]
which is called the Maxwellian molecule type cross section, because the analysis
relies on a simpler form of the equation after taking Fourier transform
in $v$ by the Bobylev formula. 
As usual, we may restrict  $\theta \in [0,\pi/2]$ by considering 
 {\it symmetrized}  cross section 
\[
[ b(\cos \theta)+b(\cos (\pi-\theta))]{\bf 1}_{0\le \theta\le \pi/2}
\]
(see \cite{villani2}). In what follows, we make a slightly general assumption on the cross section that
\begin{align}\label{index-sing}
 \sin^{\a_0} (\theta/2) b(\cos \theta) 
\sin \theta  \in L^1((0, \pi/2]) \enskip \mbox{for some} \enskip \alpha_0 \in (0, 2),
\end{align}
which is fulfilled for $b$ with \eqref{singular-cross} if $\nu < \alpha_0$.

For $k\in\bN^+,\a\in[0,2),k+\a>1$, we denote by $P_{2k-2+\a}(\RR^d)$ the  set of all probability measure  $F$ on $\RR^d$, $d \ge 1$, such that
\[
\int_{\RR^d} |v|^{2k-2+\a} dF(v) < \infty\,
\] 
and 
\begin{align}\label{mean}
\int_{\RR^d} v_j dF(v) = 0, \enskip j =1,\cdots d\,,
\end{align}
when $2k-2+\a \ge 1$.
In this paper, we consider
the Cauchy problem of \eqref{bol} with initial datum
\begin{equation}\label{initial}
f(0,v) = F_0 \in P_{2k-2+\a}.
\end{equation}

Let $\cF$ denote the Fourier transform operator defined as 
\[
\cF(F)(\xi) =\hat F(\xi) = \int_{\RR^d} e^{-iv\cdot \xi} dF(v)
\]
for each $F \in P_0(\RR^d)$, called the characteristic function of $F$, and  $\cK = \cF(P_0(\RR^d))$.
Inspired by a series of works by Toscani and his co-authors \cite{Carlen-Gabetta-Toscani, Gabetta-Toscani-Wennberg, toscani-villani}, Cannone-Karch introduced the space 
$\cK^\a$ for $\a > 0$  defined as follows:
\begin{align}\label{K-al}
\cK^\alpha =\{ \varphi \in \cK\,;\, \|\varphi - 1\|_{\alpha} < \infty\}\,,
\end{align}
where 
\begin{align}\label{dis-norm}
\|\varphi - 1\|_{\alpha} = \sup_{\xi \in \RR^d} \frac{|\varphi(\xi) -1|}{|\xi|^\alpha}.
\end{align}
The space $\cK^\alpha $ endowed with the distance 
\begin{align}\label{distance}
\|\varphi - \tilde \varphi\|_{\alpha} = \sup_{\xi \in \RR^d} \frac{|\varphi(\xi) -
\tilde \varphi(\xi) |}{|\xi|^\alpha}
\end{align}
is a complete metric space (see Proposition 3.10 of \cite{Cannone-Karch-1}).
Moreover, $\cK^\alpha =\{1\}$ for all $\alpha >2$ and the 
following embeddings
(Lemma 3.12 of \cite{Cannone-Karch-1}) hold
\[
\{1\} \subset \cK^\alpha \subset \cK^\beta \subset \cK \enskip \enskip
\mbox{for all $2\ge \alpha \ge \beta > 0$}\,\,.
\]
With this classification on characteristic functions, the global
existence of solution in $\cK^\alpha$ was studied in \cite{Cannone-Karch-1}(see also \cite{morimoto-12}) with the assumption \eqref{index-sing}. 
The space $\cK^\a$ arises in connection with the Fourier image of $P_\a(\RR^d)$ and it is proved 
$\cF(P_\a(\RR^d)) \subset \cK^\a$. However, the inclusion is proper.  Indeed,  it is shown (see Remark 3.16 of \cite{Cannone-Karch-1}) that the function 
$\varphi_\a (\xi) = e^{-|\xi|^\a}$, with $\a \in (0,2)$, belongs to $\cK^\a$ but $p_\a(v)= \cF^{-1} (\varphi_\a)(v)$, the density of $\a$-stable symmetric L\'evy process, 
is not contained in $P_\a(\RR^d)$. 
Hence, the solution obtained in the function space $\cK^\alpha$ does not
represent the moment properties in physics even when it is assumed initially.

In order to capture the precise moment condition in the Fourier
space,  a precise classification on characteristic functions
was introduced in \cite{MWY-2015} ( see also \cite{MWY-2014}). Let 
\begin{align}\label{M-al}
	\wt\cM^\alpha =\{ \varphi \in \cK\,;\, \|\re\varphi - 1\|_{\cM^\alpha}+\|\varphi - 1\|_\a < \infty\}\,,\enskip \a \in (0,2)\,,
\end{align}
where
\begin{align}\label{integral-norm}
	\|\varphi -1\|_{\cM^\a}  = \int_{\RR^d} \frac{|\varphi(\xi)-1 |}{|\xi|^{d+\a} }d\xi\,,
\end{align}
and $\re\vp$ stands for the real part of $\vp(\xi)$. 
Clearly, $\wt\cM^\a\subset\cK^\a$. Moreover, it was shown  in \cite{MWY-2015} that for any $\a\in(0,2)$, $\wt\cM^\a=\cF(P_\a)$.

For $\varphi, \tilde \varphi \in \wt\cM^\a$, put
\begin{align*}
	\|\varphi -\tilde \varphi \|_{\wt\cM^\a}  = \int_{\RR^d} \frac{|\re\varphi(\xi)-\re\tilde \varphi(\xi) |}{|\xi|^{d+\a} }d\xi,
\end{align*}
and we introduce the distance in $\wt\cM^\a$
as
\begin{align}\label{M-dist}
	{dis}_{\a,\b,\e} (\varphi ,\tilde \varphi )= \|\varphi - \tilde \varphi\|_{\wt\cM^\a}+ \|\varphi - \tilde \varphi\|_\b+\|\varphi - \tilde \varphi\|_\b^\e,
\end{align}
where $0<\b<\a<2, {0< \e <1}$.
Then $(\wt\cM^\a,dis_{\a,\b,\e})$ is a complete metric space. In \cite{MWY-2015}, the well-posedness of the 
Cauchy problem  \eqref{bol}-\eqref{initial} is established in $\wt\cM^\a$. 

In this paper, we are devoted to characterizing the Fourier images of  spaces $P_{2k-2+\a}(\RR^d)$.
In \cite{Cho}, Cho characterized the Fourier images of probability measure having finite absolute moment,
without vanishing momentum condition, in terms of the forward difference operator and its iterates. As a modification
of his results, we introduce 
a new classification of characteristic functions defined in terms of 
the symmetric central difference operator and its iterates as follows:

 For any $k\in\bN^+,\a\in[0,2),k+\a>1$, set 
\begin{align}\label{M-k-al}
\cM^\alpha_{k} =\{ \varphi \in \cK\,;\, \int \frac{\Delta^k  \varphi(\xi)}{|\xi|^{d+2k-2+\alpha}}d \xi 
< \infty\}\,,
\end{align}
where 
\begin{align*}
\Delta^1 \varphi(\xi)
=\frac{2-\varphi(\xi)-\varphi(-\xi)}{4}
=\frac{1-\re\varphi(\xi)}{2}
=  \int \sin^2 \frac{v\cdot \xi}{2} dF(v),
\end{align*}
\begin{align*}
\Delta^2 \varphi(\xi) 
&=\frac{6 -4\varphi(\xi)-4\vp(-\xi) +\varphi(2\xi)+\vp(-2\xi)
}{16}\\
&=\frac{3 -4\re\varphi(\xi) +\re\varphi(2\xi) 
}{8}
= \int  \sin^4 \frac{v\cdot \xi}{2} dF(v)
\end{align*}
and generally for $k \in \NN^+$,
\begin{align*}
\Delta^k \varphi(\xi)&=\frac12\sum_{j=0}^kc_{k,j}(\vp(j\xi)+\vp(-j\xi))\\
&=\sum_{j=0}^kc_{k,j}\re\vp(j\xi)=\int \sin^{2k} \frac{v\cdot \xi}{2} dF(v).
\end{align*}
Here, $c_{k,j}$ are the coefficients of the expansion
\begin{align}\label{expansion}
\sin^{2k}\frac x2=\sum_{j=0}^kc_{k,j}\cos(jx) \,\mbox{ for all } x\in\bR,
\end{align}
and an inductive calculation gives 
\[
c_{k,0} =2^{-2k}\binom{2k}{k}\,, \enskip c_{k,j} = {(-1)^j}{2^{-2k+1}}\binom{2k}{k+j}\,, \, j =1,\cdots, k.
\]
For $\varphi, \tilde \varphi \in \cM^\a_k$, put 
\begin{align*}
\|\varphi -\tilde \varphi \|_{\cM^\a_k}  = \int_{\RR^d} \frac{|\Delta^k \varphi(\xi)-
\Delta^k \tilde \varphi(\xi) |}{|\xi|^{d+2k-2+ \a} }d\xi\,,
\end{align*}
and introduce the distance 
\begin{align}\label{M-k-dist}
dis_{k,\a,\b} (\varphi ,\tilde \varphi )= \|\varphi - \tilde \varphi\|_{\cM^\a_k}   + \|\varphi - \tilde \varphi\|_\b\,,
\end{align}
where $0 < \b <\a$ if $k =1$ and $0 < \b <2$ if $k \ge 2$. 

\begin{theo}\label{complete} Let $k\in\bN^+,\a\in[0,2),k+\a>1$, then    
the space $\cM_k^\a$ is a complete metric space endowed with the distance \eqref{M-k-dist}.
Moreover, we have
\begin{align}\label{equivalence-space}
\cM^\a_k  =  \cF(P_{2k-2+\a}(\RR^d)) \,,
\end{align}
and the condition  that $\displaystyle \lim_{n \rightarrow \infty} dis_{k,\a,\b}(\varphi_n, \varphi) = 0$
with $\varphi_n, 
\varphi 
 \in \cM_k^\a$
implies
\begin{align}\label{measure-convergence}
&\lim_{n \rightarrow \infty} \int \psi(v) dF_n(v) = \int \psi(v) dF(v)  
 \end{align}
for any  $\psi \in C(\RR^d)$
 satisfying 
the growth condition $|\psi(v)| \lesssim \la v \ra^{2k-2+\a}$, 
where $F_n = \cF^{-1}(\varphi_n), F = \cF^{-1}(\varphi) \in P_{2k-2+\a}(\RR^d)$.
\end{theo}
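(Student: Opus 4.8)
The plan is to reduce all three assertions to a single positive, finite constant produced by interchanging the order of integration. Starting from the identity $\Delta^k\varphi(\xi)=\int_{\RR^d}\sin^{2k}\frac{v\cdot\xi}{2}\,dF(v)$ with $F=\cF^{-1}(\varphi)$, whose integrand is nonnegative, Tonelli's theorem gives
\[
\int_{\RR^d}\frac{\Delta^k\varphi(\xi)}{|\xi|^{d+2k-2+\a}}\,d\xi
=\int_{\RR^d}\Big(\int_{\RR^d}\frac{\sin^{2k}\frac{v\cdot\xi}{2}}{|\xi|^{d+2k-2+\a}}\,d\xi\Big)\,dF(v).
\]
For fixed $v\neq0$ the substitution $\xi\mapsto\xi/|v|$ and the rotational invariance of $|\xi|$ turn the inner integral into $C_{k,\a}\,|v|^{2k-2+\a}$, where $C_{k,\a}=\int_{\RR^d}|\xi|^{-(d+2k-2+\a)}\sin^{2k}\frac{\xi_1}{2}\,d\xi$. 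I will check $0<C_{k,\a}<\infty$: near the origin $\sin^{2k}\frac{\xi_1}{2}\lesssim|\xi|^{2k}$ makes the integrand $\lesssim|\xi|^{2-\a-d}$, which is integrable because $\a<2$, while boundedness of $\sin^{2k}$ and $2k-2+\a>0$ (guaranteed by $k+\a>1$) give integrability at infinity, and positivity is clear. Recalling $\Delta^k 1=0$ and $\Delta^k\varphi\ge0$, this produces the master identity
\[
\|\varphi-1\|_{\cM_k^\a}
=\int_{\RR^d}\frac{\Delta^k\varphi(\xi)}{|\xi|^{d+2k-2+\a}}\,d\xi
=C_{k,\a}\int_{\RR^d}|v|^{2k-2+\a}\,dF(v).
\]

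From this identity the equivalence \eqref{equivalence-space} is immediate, since the left-hand integral is finite precisely when the $(2k-2+\a)$-moment of $F$ is finite; the normalization \eqref{mean}, imposed only when $2k-2+\a\ge1$, is a linear constraint that reads $\nabla\varphi(0)=0$ once the moment is finite, is stable under the weak and pointwise limits below, and is understood to be part of membership in $\cM_k^\a$. To prove completeness, take a $dis_{k,\a,\b}$-Cauchy sequence $\{\varphi_n\}$. The $\|\cdot\|_\b$ component forces $\{\varphi_n(\xi)\}$ to be Cauchy in $\CC$ at each $\xi$, via $|\varphi_n(\xi)-\varphi_m(\xi)|\le\|\varphi_n-\varphi_m\|_\b|\xi|^\b$, hence to converge pointwise to some $\varphi$; uniform convergence on compact sets makes $\varphi$ continuous with $\varphi(0)=1$, so L\'evy's continuity theorem yields $\varphi\in\cK$ and $F_n\Rightarrow F:=\cF^{-1}(\varphi)$. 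Because $\Delta^k$ is a finite linear combination of evaluations of $\re\varphi$, we have $\Delta^k\varphi_n\to\Delta^k\varphi$ pointwise; Fatou's lemma applied to the nonnegative integrands gives $\|\varphi-1\|_{\cM_k^\a}\le\liminf_n\|\varphi_n-1\|_{\cM_k^\a}<\infty$ (a Cauchy sequence being bounded), so $\varphi\in\cM_k^\a$, and likewise $\|\varphi_n-\varphi\|_{\cM_k^\a}\le\liminf_m\|\varphi_n-\varphi_m\|_{\cM_k^\a}$, which is arbitrarily small for $n$ large. With the $\|\cdot\|_\b$-convergence this gives $dis_{k,\a,\b}(\varphi_n,\varphi)\to0$; the metric axioms are routine, separation being supplied by the $\|\cdot\|_\b$ term.

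For the convergence statement \eqref{measure-convergence}, assume $dis_{k,\a,\b}(\varphi_n,\varphi)\to0$. As above the $\|\cdot\|_\b$ part gives $F_n\Rightarrow F$, hence $\int\psi\,dF_n\to\int\psi\,dF$ for every bounded continuous $\psi$. To reach the critical growth I use the moments: since $\Delta^k\varphi_n,\Delta^k\varphi\ge0$,
\[
\Big|\int_{\RR^d}|v|^{2k-2+\a}\,d(F_n-F)(v)\Big|
=C_{k,\a}^{-1}\Big|\int_{\RR^d}\frac{\Delta^k\varphi_n-\Delta^k\varphi}{|\xi|^{d+2k-2+\a}}\,d\xi\Big|
\le C_{k,\a}^{-1}\|\varphi_n-\varphi\|_{\cM_k^\a}\to0,
\]
so the $(2k-2+\a)$-moments converge. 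Weak convergence together with convergence of the integral of the nonnegative continuous weight $|v|^{2k-2+\a}$ forces uniform integrability of $\{|v|^{2k-2+\a}\}$ along $\{F_n\}$, and a truncation argument then upgrades the convergence to all continuous $\psi$ with $|\psi(v)|\lesssim\la v\ra^{2k-2+\a}$.

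Two analytic points carry the weight. Conceptually the crux is the Tonelli-plus-scaling identity of the first paragraph, which converts the abstract difference-operator condition into an honest moment and makes all three claims essentially formal. The step I expect to be genuinely delicate is the final upgrade: passing from weak convergence to test functions of growth exactly $\la v\ra^{2k-2+\a}$, rather than $o(\la v\ra^{2k-2+\a})$, rests on controlling the tails $\int_{|v|>R}|v|^{2k-2+\a}\,dF_n$ uniformly in $n$, which is exactly the uniform-integrability content fed by the moment convergence above.
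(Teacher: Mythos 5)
Your proof is correct and follows essentially the same route as the paper: the Tonelli-plus-scaling identity is exactly the content of Proposition \ref{chara-2} there (the paper states it in localized form over $\{|\xi|\le M/R\}$, which directly yields the uniform tail bound \eqref{uniform-moment-est} that you instead recover via moment convergence and uniform integrability), and your completeness and convergence arguments via pointwise limits and Fatou coincide with the paper's. The only point you treat more explicitly than the paper is the zero-momentum normalization \eqref{mean}, which the paper's definition of $\cM_k^\a$ also leaves implicit.
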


\begin{rema}\label{remark-additional}
\begin{itemize}
	\item[(1)] In the case $k=1, 0< \a <2$, the newly defined space $\cM_k^\a$ coincides with
$\wt\cM^\a$. Since the solution in $\wt\cM^\a$ has been well studied in \cite{MWY-2014,MWY-2015}, we will mainly focus on the case $k\ge2,0 \le \a <2$ in this paper.
	\item[(2)] For a probability  measure $F$ on the real line, it has been shown in \cite{kawata} that, if $\int|x|^{2k-2+\a}dF(x)<\infty$, then there exists $C_{k,\a}>0$, which depends  only on $k,\a$, such that
	\begin{align*}
	\int_0^\infty\frac1{t^{1+\a}}\Big\{1-\re\vp(t)+\sum_{j=1}^{k-1}\frac{t^{2j}\vp^{(2j)}(0)}{(2k)!}\Big\}dt=C_{k,\a}\int_{-\infty}^{\infty}|x|^{2k-2+\a}dF(x),
	\end{align*}
where $\varphi =\cF(F)$.	However, this characterization is different from the one given in \eqref{M-k-al}.
\end{itemize}
\end{rema}

Thanks to this  new characterization of $P_{2k-2+\a}$ by its exact Fourier image $\cM_k^\a$, we can obtain the following theorems, that improves the continuity results of solutions established  in \cite{MWY-2015}.

\begin{theo}\label{existence-base-space}
Assume that $b$ satisfies
\eqref{index-sing}. Let $k\in\bN$, $k\ge2$, $\a\in[0,2)$.
If the initial datum $F_0 \in P_{2k-2+\a}(\RR^3)$, 
then there exists a unique measure valued solution $F_t \in C([0,\infty), {P}_{2k-2+\a}(\RR^3))$ to the Cauchy problem 
\eqref{bol}-\eqref{initial} which preserves the energy and momentum for all time $t>0$, that is, 
\begin{align}\label{conservation}
\int|v|^2dF_t=\int|v|^2dF_0,\quad\int v_jdF_t=0 \enskip \mbox{ for } j=1,2,3.
\end{align}
Furthermore, 
if $b$ satisfies \eqref{singular-cross} and 
if $F_0$ is not a single Dirac mass, then $F_t$ admits the density distribution function 
 $f(t,v)$, $dF_t(v) = f(t,v)dv$, satisfying 
$$f \in C((0,\infty);L_{2k-2+\a}^1(\RR^3)\cap H^\infty(\RR^3)).$$
\end{theo}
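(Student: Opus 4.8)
The plan is to work entirely on the Fourier side, where the Bobylev formula turns \eqref{bol} into the closed equation
\[
\partial_t \vp(t,\xi) = \int_{\SS^2} b\Big(\tfrac{\xi}{|\xi|}\cdot\sigma\Big)\big[\vp(t,\xi^+)\vp(t,\xi^-)-\vp(t,\xi)\big]\,d\sigma,\quad \vp(0,\cdot)=\cF(F_0),
\]
with $\xi^\pm=\tfrac12(\xi\pm|\xi|\sigma)$, and to use Theorem \ref{complete} to translate statements about $\vp(t,\cdot)\in\cM_k^\a$ back into statements about $F_t\in P_{2k-2+\a}(\RR^3)$. The backbone is the well-posedness already available for $k=1$ in $\wt\cM^\a$ from \cite{MWY-2014,MWY-2015}: by Remark \ref{remark-additional}(1) that theory produces a unique solution controlled in the weak metric $\|\cdot\|_\b$, and uniqueness in that coarser metric immediately yields uniqueness for the present, finer, class, since for $k\ge2$ one has $2k-2+\a\ge2$ and hence $\cM_k^\a=\cF(P_{2k-2+\a})\subset\cF(P_\b)=\wt\cM^\b$ for any $\b\in(0,2)$, with the $\|\cdot\|_\b$-component of $dis_{k,\a,\b}$ matching. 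What is genuinely new for $k\ge2$ is (i) propagation of the higher moment $2k-2+\a$, (ii) continuity of $t\mapsto F_t$ in $P_{2k-2+\a}$, and (iii) the $H^\infty$ smoothing under the strong singularity \eqref{singular-cross}.

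For existence together with moment propagation I would solve the Bobylev equation by a fixed-point argument in $(\cM_k^\a, dis_{k,\a,\b})$, which is complete by Theorem \ref{complete}. The main analytic input is that the bilinear Fourier collision operator is bounded and Lipschitz with respect to the distance \eqref{M-k-dist}: after applying the iterated symmetric difference $\Delta^k$ and integrating against $|\xi|^{-(d+2k-2+\a)}$, the estimate reduces to the cancellation built into $\int_{\SS^2} b\,[\,\cdots]\,d\sigma$ together with the elementary bound $|\Delta^k\vp|\lesssim 1$, and the angular assumption \eqref{index-sing} is exactly what renders the singular $\sigma$-integral convergent after this cancellation. A uniform a priori bound on $\sup_{t\le T}\int|v|^{2k-2+\a}dF_t$ then comes from the Maxwellian-molecule moment hierarchy: the moments of order $\le 2k-2+\a$ satisfy a closed Povzner-type system, so the top moment grows at most exponentially and stays finite on $[0,T]$. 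Invoking \eqref{equivalence-space} yields $F_t\in P_{2k-2+\a}(\RR^3)$ for all $t$, and, since the Duhamel formulation makes $t\mapsto\vp(t,\cdot)$ continuous in $dis_{k,\a,\b}$, the convergence property \eqref{measure-convergence} upgrades this to continuity of $t\mapsto F_t$ in $P_{2k-2+\a}$, giving $F_t\in C([0,\infty),P_{2k-2+\a}(\RR^3))$.

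Conservation \eqref{conservation} I would read off from the behavior of $\vp(t,\cdot)$ near $\xi=0$, which is legitimate because the finite moment $2k-2+\a\ge2$ guarantees the relevant low-order Taylor expansion. The vanishing-momentum normalization \eqref{mean} is built into $P_{2k-2+\a}$ and is preserved because $\nabla_\xi\vp(t,0)$ obeys a linear homogeneous equation, reflecting the collision invariant $\xi^++\xi^-=\xi$; energy conservation $\int|v|^2dF_t=\int|v|^2dF_0$ follows from the second invariant $|\xi^+|^2+|\xi^-|^2=|\xi|^2$, which forces the quadratic coefficient of $\vp(t,\xi)$ at the origin, namely $-\Delta_\xi\vp(t,0)=\int|v|^2dF_t$, to be constant in $t$.

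The main obstacle is the regularization statement. Under \eqref{singular-cross} the non-cutoff $\sigma$-integral behaves like a fractional diffusion of order $\nu$ in $v$, and the standard route is a coercivity (entropy-production) lower bound that, in Fourier variables, forces $\vp(t,\xi)$ to decay faster than any polynomial for $t>0$; quantitatively one derives a differential inequality $\tfrac{d}{dt}\|\,|\xi|^N\vp(t)\,\|_{L^2}^2 + c(t)\,\|\cdots\|^2 \le C_N\|\cdots\|^2$ and bootstraps in $N$ to obtain $f(t,\cdot)\in H^\infty(\RR^3)$ on every interval $[t_0,\infty)$. The hypothesis that $F_0$ is not a single Dirac mass is precisely what keeps the coercivity constant strictly positive: a centered Dirac mass has $\vp\equiv1$, a stationary solution admitting no smoothing, whereas any other datum has genuine spread on which the singular kernel acts. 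Combining this $H^\infty$ gain with the moment propagation and the time-continuity above, and transferring the weighted continuity once more via \eqref{measure-convergence}, yields $f\in C((0,\infty);L^1_{2k-2+\a}(\RR^3)\cap H^\infty(\RR^3))$. The delicate point throughout is the simultaneous control of the two mechanisms — the $\xi$-decay encoding smoothing versus the $\xi\to0$ behavior encoding the moment weight — uniformly on compact subintervals of $(0,\infty)$.
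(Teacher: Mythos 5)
Your overall frame (work on the Bobylev equation, use Theorem \ref{complete} to translate back to $P_{2k-2+\a}$, inherit uniqueness from the coarser space) matches the paper, but two of your central steps have genuine gaps. First, your existence argument rests on the claim that the bilinear operator $\vp\mapsto\int_{\SS^2}b\,(\vp(\xi^+)\vp(\xi^-)-\vp(\xi))\,d\sigma$ is Lipschitz with respect to $dis_{k,\a,\b}$, so that a fixed point can be run directly in $(\cM_k^\a,dis_{k,\a,\b})$. This is asserted, not proved, and it is doubtful: the norm $\|\cdot\|_{\cM^\a_k}$ sees only the combination $\Delta^k\vp(\xi)=\sum_j c_{k,j}\re\vp(j\xi)$, and $\Delta^k$ applied to the product $\vp(\xi^+)\vp(\xi^-)$ does not split into expressions controlled by $\Delta^k$ of the two arguments; the bound $|\Delta^k\vp|\lesssim1$ gives boundedness, never Lipschitz continuity of a bilinear map. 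Every estimate in the paper's Section 3 exploits that $\vp$ is the characteristic function of a nonnegative measure, so that $\Delta^k\vp=\int\sin^{2k}(v\cdot\xi/2)\,dF$ with $dF\ge0$; such representations yield a priori estimates on actual solutions, not contraction estimates for differences of arbitrary elements of $\cM_k^\a$. The paper instead obtains the solution from Theorem 1.8 of \cite{MWY-2015} in $\wt\cM^{\b_1}$ (using $\cM_k^\a\subset\cK^2\subset\wt\cM^{\b_1}$) and then proves that this solution stays in $\cM_k^\a$ via moment propagation (Corollary 1.7 of \cite{MWY-2015} together with \eqref{opposite}); you could repair your existence step the same way, consistently with how you already handle uniqueness.

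Second, and more seriously, you treat the time-continuity of $t\mapsto\vp(t,\cdot)$ in the metric $dis_{k,\a,\b}$ as automatic (``the Duhamel formulation makes $t\mapsto\vp(t,\cdot)$ continuous''), but this is exactly the paper's main technical estimate \eqref{continuity-M}, and it is the step your whole endgame depends on: \eqref{measure-convergence} has metric convergence as its hypothesis, so your upgrade to strong $L^1_{2k-2+\a}$ continuity (local strong convergence from the $H^\infty$ bounds plus tail control from convergence of the top moment, which is otherwise a sound mechanism) cannot even start without it. Proving \eqref{continuity-M} is not a soft consequence of cancellation plus $|\Delta^k\vp|\lesssim1$: one must show that $\Delta^k\partial_t\vp$ is integrable against $|\xi|^{-(3+2k-2+\a)}$, which in the paper forces the decomposition into $I_{j,1},I_{j,2},I_{j,3}$, addition formulas for $\sin^{2k}$, and --- crucially --- the vanishing-momentum cancellation: in the term containing $\vp(j\xi^+)(\vp(j\xi^-)-1)$ the leading contribution is linear in $\xi^-\cdot w$, and since $|\xi^-|\sim|\xi|\sin(\theta/2)$, the angular integral it produces behaves like $\int b(\cos\theta)\,\theta\sin\theta\,d\theta$, which diverges under \eqref{index-sing} whenever $\a_0>1$; only after replacing $\sin(\xi^-\cdot w/2)$ by $\sin(\xi^-\cdot w/2)-\xi^-\cdot w/2$ (legitimate because $\int w\,dF_\tau(w)=0$) does one gain the extra power $\theta^{\max\{\a_0,1\}}$ needed for convergence. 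In your proposal momentum conservation appears only in the verification of \eqref{conservation}; its essential role in making the key continuity estimate finite is the missing idea. By contrast, your treatment of the $H^\infty$ smoothing is not a gap relative to the paper, which simply cites Corollary 1.10 of \cite{MWY-2015} for that part.
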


The proof of the above theorem is given in the Fourier space. In fact, 
by letting $\varphi(t,\xi) = \int_{\RR^3}e^{-iv\cdot \xi} dF_t(v)$ and $\varphi_0= \cF(F_0)$, 
it follows {from} the Bobylev formula \cite{bobylev,bobylev2} that
the Cauchy problem \eqref{bol}-\eqref{initial} is reduced to 
\begin{equation}\label{c-p-fourier}
\left \{ 
\begin{array}{l}\dis \partial_t \varphi(t,\xi)
=\int_{\SS^2}b\Big(\frac{\xi \cdot \sigma}{|\xi|}\Big) \Big( \varphi(t,\xi^+)\varphi(t, \xi^-) - \varphi(t, \xi)
\varphi(t,0)\Big) d\sigma, \\\\
\dis \varphi(0,\xi)=\varphi_0(\xi), \enskip \mbox{where} \enskip
\dis \xi^\pm = \frac{\xi}{2} \pm \frac{|\xi|}{2} \sigma\,.
\end{array}
\right.
\end{equation}
By  Theorem \ref{complete}, to prove Theorem \ref{existence-base-space}
it suffices to show 
\begin{theo}\label{fourier-space}
Assume that $b$ satisfies { \eqref{index-sing}}. Let $k\in\bN$, $k\ge2$, $\a\in[0,2)$, $\b\in(\a_0,2)$. 
If the initial datum $\varphi_0=\cF(F_0)\in\cM_k^\a$, with $F_0$ satisfying \eqref{mean}, 
 then there exists a unique classical solution $\varphi(t,\xi) \in C([0,\infty), \cM_k^\a)$
to the Cauchy problem \eqref{c-p-fourier}. Moreover, for any $T>0$ and $0\le s<t\le T$, we have
\begin{align}\label{fourier-continuity}
||\varphi(t,\cdot)- \varphi(s,\cdot)||_\b&\le |t-s|\cdot  e^{\lam_\b T}||1-\vp_0||_{\b},\\
||\varphi(t,\cdot)- \varphi(s,\cdot)||_{\cM_k^\a}&\lesssim |t-s| \cdot\sup_{\tau\in[0,T]}\int|v|^{2k-2+\a}dF_\tau\,,\label{continuity-M}
\end{align}
where $\lam_\b>0$ is a constant defined as
\begin{align*}
\lam_\b=\int_0^{\pi/2}b(\cos\theta)\Big(\sin^\b\frac\theta2+\cos^\b\frac\theta2-1\Big)\sin\theta d\theta. 
\end{align*}
\end{theo}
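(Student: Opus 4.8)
The plan is to treat \eqref{c-p-fourier} as an abstract ODE $\partial_t\vp=\mathcal{Q}[\vp]$ in the complete metric space $(\cM_k^\a,dis_{k,\a,\b})$ furnished by Theorem \ref{complete}, where
\[
\mathcal{Q}[\vp](\xi)=\int_{\SS^2} b\Big(\tfrac{\xi\cdot\sigma}{|\xi|}\Big)\big(\vp(\xi^+)\vp(\xi^-)-\vp(\xi)\big)\,d\sigma
\]
is the bilinear Bobylev operator (using $\vp(0)=1$). Existence and uniqueness will follow from the two a priori estimates that simultaneously yield \eqref{fourier-continuity}--\eqref{continuity-M}: a Lipschitz bound in the $\|\cdot\|_\b$ metric and a propagation bound in the $\|\cdot\|_{\cM_k^\a}$ metric. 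Throughout I will use $|\xi^+|=|\xi|\cos(\theta/2)$, $|\xi^-|=|\xi|\sin(\theta/2)$, the scaling $(j\xi)^\pm=j\xi^\pm$, and $|\vp|\le1$.

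For the $\b$-estimate, the difference $w=\vp-\tilde\vp$ of two solutions obeys the \emph{linear} equation $\partial_t w(\xi)=\int_{\SS^2}b(\tfrac{\xi\cdot\sigma}{|\xi|})[\vp(\xi^+)w(\xi^-)+w(\xi^+)\tilde\vp(\xi^-)-w(\xi)]\,d\sigma$, since the loss terms combine into $w(\xi)$ via $\vp(0)=\tilde\vp(0)=1$. Dividing by $|\xi|^\b$, bounding $|\vp|,|\tilde\vp|\le1$, and inserting $|\xi^\pm|^\b/|\xi|^\b=\cos^\b(\theta/2),\sin^\b(\theta/2)$, a differential inequality for $W(t)=\|w(t)\|_\b$ gives $\tfrac{d}{dt}W\le\lambda_\b W$, whence $\|w(t)\|_\b\le e^{\lambda_\b t}\|w(0)\|_\b$; the multiplier $\sin^\b(\theta/2)+\cos^\b(\theta/2)-1$ is exactly the integrand defining $\lambda_\b$, and $\lambda_\b<\infty$ precisely because $\b>\a_0$ and \eqref{index-sing} hold. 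This yields uniqueness. For \eqref{fourier-continuity} I would apply the same bound to the pair $\vp(\cdot+h),\vp(\cdot)$, reducing to the short-time increment $\|\vp(h)-\vp_0\|_\b\le\int_0^h\|\mathcal{Q}[\vp(\tau)]\|_\b\,d\tau$; writing $\vp(\xi^+)\vp(\xi^-)-\vp(\xi)=-\vp(\xi^+)(1-\vp(\xi^-))+(\vp(\xi^+)-\vp(\xi))$ and using that a finite first moment makes $\vp\in C^1$ with $|\nabla\vp|\le\int|v|\,dF$, both pieces are $O(\sin^\b(\theta/2))$ uniformly in $\xi$, so $\|\mathcal{Q}[\vp]\|_\b$ is finite and controlled by $\|1-\vp\|_\b$; combined with the propagation $\|1-\vp(t)\|_\b\le e^{\lambda_\b t}\|1-\vp_0\|_\b$ this produces \eqref{fourier-continuity}.

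The $\cM_k^\a$-estimate is the crux and the source of \eqref{continuity-M}. Since $\Delta^k$ is linear and $(j\xi)^\pm=j\xi^\pm$, the expansion $\sum_{j=0}^k c_{k,j}\cos(jx)=\sin^{2k}(x/2)$ gives the clean identity
\[
\Delta^k\mathcal{Q}[\vp](\xi)=\int_{\SS^2}b\Big(\tfrac{\xi\cdot\sigma}{|\xi|}\Big)\iint\Big[\sin^{2k}\tfrac{v\cdot\xi^++w\cdot\xi^-}{2}-\sin^{2k}\tfrac{v\cdot\xi}{2}\Big]\,dF(v)\,dF(w)\,d\sigma .
\]
Writing $\xi=r\omega$, both arguments are homogeneous of degree one, so after applying the weight $|\xi|^{-(d+2k-2+\a)}$ the radial integral reduces, via $\int_0^\infty\sin^{2k}(rA/2)\,r^{-(2k-1+\a)}\,dr=c_{k,\a}|A|^{2k-2+\a}$, to powers of the forms $A=\tfrac12(v+w)\cdot\omega+\tfrac12(v-w)\cdot\sigma$ and $B=v\cdot\omega$. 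The decisive cancellation is $|A-B|\le|v-w|\sin(\theta/2)$, which forces the bracket to vanish as $\theta\to0$ and compensates the singularity of $b$, while away from $\theta=0$ one uses $|A|^{2k-2+\a}+|B|^{2k-2+\a}\lesssim(|v|+|w|)^{2k-2+\a}$. Carrying out the $\sigma$- and $\omega$-integrations yields $\int_{\RR^d}|\Delta^k\mathcal{Q}[\vp](\xi)|\,|\xi|^{-(d+2k-2+\a)}\,d\xi\le C\int_{\RR^d}|v|^{2k-2+\a}\,dF(v)$, and since $\Delta^k(\vp(t)-\vp(s))=\int_s^t\Delta^k\mathcal{Q}[\vp(\tau)]\,d\tau$, integrating in time gives \eqref{continuity-M}. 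Propagation of the moment $\sup_{\tau\le T}\int|v|^{2k-2+\a}\,dF_\tau<\infty$ (standard for Maxwellian molecules, and here also obtainable from the same estimate by Gronwall) then places $\vp(t)\in\cM_k^\a$ via Theorem \ref{complete}; with \eqref{continuity-M} and \eqref{fourier-continuity} this gives $\vp\in C([0,\infty),\cM_k^\a)$, and existence follows by running these estimates on a successive-approximation (Wild-type) scheme whose convergence is guaranteed by the $\b$-contraction on the complete space.

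I expect the main obstacle to be this radial--angular estimate. For $k\ge2$ one has $2k-2+\a\ge2$, so the naive Lipschitz bound $|\sin^{2k}(rA/2)-\sin^{2k}(rB/2)|\lesssim r|A-B|$ is not integrable near $r=0$ against $r^{-(2k-1+\a)}$; instead I would factor $\sin^{2k}x-\sin^{2k}y$ through $\sin^2x-\sin^2y=\sin(x+y)\sin(x-y)$ to expose simultaneously the $r^{2k}$ vanishing at the origin and the factor $|A-B|$, then split both the $r$-integral and the $\theta$-integral to balance the order of vanishing against the singularity allowed by \eqref{index-sing}. Making this balance uniform over the full range $\a\in[0,2)$, and justifying the Fubini interchange underlying the identity for $\Delta^k\mathcal{Q}[\vp]$, is where the real work lies.
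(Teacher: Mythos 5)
Your overall architecture (recover well-posedness from the $\cK^\b$ / $\wt\cM^{\b_1}$ theory, propagate the $(2k-2+\a)$-moment, then establish the two time-continuity estimates) matches the paper's, which simply imports existence, uniqueness and moment propagation from Theorem 1.8 and Corollary 1.7 of \cite{MWY-2015} via the embedding $\cM_k^\a\subset\cK^2\subset\wt\cM^{\b_1}$. The genuine gap is in your key estimate for \eqref{continuity-M}. The ``decisive cancellation'' $|A-B|\le |v-w|\sin(\theta/2)$ delivers exactly \emph{one} power of $\sin(\theta/2)$, and so does the fallback you propose (factoring through $\sin^2x-\sin^2y=\sin(x+y)\sin(x-y)$), as well as the $C^1$ bound $|\vp(\xi^+)-\vp(\xi)|\le|\xi^-|\int|v|\,dF$ in your $\b$-argument. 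One power of $\sin(\theta/2)$ makes $b(\cos\theta)\sin\theta\,\sin(\theta/2)$ integrable only when $\a_0\le 1$; the hypothesis \eqref{index-sing} allows any $\a_0\in(0,2)$ (equivalently $\nu$ up to $2$ in \eqref{singular-cross}), and for $\a_0\in(1,2)$ your angular integral diverges. No pointwise manipulation of the single bracket $\sin^{2k}(rA/2)-\sin^{2k}(rB/2)$ can do better, since $A-B=\tfrac12(v-w)\cdot(\sigma-\tfrac{\xi}{|\xi|})$ is genuinely of first order in $\theta$.

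The missing second power of $\theta$ comes from two structural cancellations that your plan never invokes. First, a symmetrization in $\sigma$: the paper compares $\vp(j\xi^+)$ not with $\vp(j\xi)$ but with $\vp(j\zeta)$, where $\zeta$ is the projection of $\xi^+$ onto the $\xi$-axis, and replaces the first difference by the second difference $\tfrac12\bigl(\vp(j\xi^+)+\vp(j\tilde\xi^+)\bigr)-\vp(j\zeta)$ with $\tilde\xi^+$ the reflection of $\xi^+$; together with $|\zeta-\xi|=|\xi|\sin^2(\theta/2)$ this yields $\sin^2(\theta/2)$. In your language this is the fact that the azimuthal average of $A-B$ over $\sigma$ is $O(\theta^2)$ --- but you take absolute values before averaging, which destroys it. Second, the zero-momentum condition \eqref{mean}, which appears explicitly in the hypotheses of the theorem but nowhere in your estimate, is what upgrades the gain-term factor $\vp(j\xi^-)-1$ from $O(|\xi^-|)$ to the required order, via $\int\bigl(\sin\tfrac{\xi^-\cdot w}{2}-\tfrac{\xi^-\cdot w}{2}\bigr)dF_\tau(w)$ and $|z-\sin z|\lesssim|z|\min\{|z|,1\}$ (the paper's term $J_{3,1}$). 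You correctly flag the radial--angular balance as ``where the real work lies,'' but the resolution you sketch does not contain either mechanism, so the proposal as written proves the theorem only for weak singularities $\a_0\le 1$, not in the stated generality.
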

\begin{rema}
	\eqref{fourier-continuity} and \eqref{continuity-M} imply that
	\begin{align}
	dis_{k,\a,\b}(\vp(t,\xi),\vp(s,\xi))\le C_{T,\vp_0}|t-s|,
	\end{align}
	where $C_{T,\vp_0}>0$ only depends on $T$ and the initial data.
\end{rema}

\section{Characterization of $P_{2k+\a}(\bR^3)$}\label{s2}
\setcounter{equation}{0}

\begin{prop}\label{chara-2}
Let $k\in\bN,\a \in [0,2),k+\a>1$ and let $\cM^{\a}_{k}$ be a subspace of $\cK = \cF(P_0(\RR^d))$ defined by \eqref{M-al}.
Then we have the formula  \eqref{equivalence-space}. 
Furthermore, for $M \in[1, \infty]$, if we put 
\begin{align}\label{uniform-constant}
c_{\alpha,d,M,k} = \int_{\{|\zeta| \le M\}} \frac{\sin^{2k} (\ve_1 \cdot \zeta/2)}{|\zeta|^{d+2k-2+\alpha}} d \zeta >0,
\end{align}
and if $F = \cF^{-1}(\varphi)$ for $\varphi \in \cM_k^\a$, then for any $R >0$ we have
\begin{align}\label{uniform-moment-est}
\int_{\{|v| \ge R \}} |v|^{2k -2+ \alpha} dF(v) \le \frac{1}{c_{\alpha,d,1,k}}
\int_{\{|\xi| \le 1/R\}} \frac{\Delta^k \varphi(\xi)}{|\xi|^{d+2k-2+ \alpha}} d \xi.
\end{align}
Moreover,
\begin{align}\label{moment-es}
\int_{\RR^d} |v|^{2k-2+\alpha} dF(v) \le \frac{1}{c_{\alpha,d, \infty,k}}
\int_{\RR^d} \frac{\Delta^k \varphi(\xi)}{|\xi|^{d+2k-2+\alpha}} d \xi  \,.
\end{align}
\end{prop}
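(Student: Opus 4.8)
The engine of the proof is the pointwise identity
\[
\Delta^k\vp(\xi)=\int_{\RR^d}\sin^{2k}\frac{v\cdot\xi}{2}\,dF(v),
\]
recorded just before the statement, which exhibits $\Delta^k\vp$ as a nonnegative average against $F$. The plan is to insert this into the defining integral of $\cM_k^\a$ and, since the resulting double integrand is nonnegative and measurable, apply Tonelli's theorem to interchange the order of integration:
\[
\int_{\RR^d}\frac{\Delta^k\vp(\xi)}{|\xi|^{d+2k-2+\a}}\,d\xi
=\int_{\RR^d}\Big(\int_{\RR^d}\frac{\sin^{2k}(v\cdot\xi/2)}{|\xi|^{d+2k-2+\a}}\,d\xi\Big)\,dF(v).
\]
Everything then reduces to the inner $\xi$-integral for fixed $v\neq0$, and the interchange itself is free of integrability hypotheses because both sides lie in $[0,\infty]$.

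For fixed $v\neq0$ write $v=|v|\omega$ with $|\omega|=1$ and perform the change of variables $\zeta=|v|\,\xi$. This produces the factor $|v|^{2k-2+\a}$ and, after using rotational invariance of Lebesgue measure to rotate $\omega$ onto $\ve_1$, leaves the $v$-independent integral defining $c_{\a,d,\infty,k}$; thus the inner integral equals $c_{\a,d,\infty,k}\,|v|^{2k-2+\a}$. I must then check this constant is finite and strictly positive. Positivity is immediate, since the integrand is continuous and strictly positive off the measure-zero set $\{\sin(\zeta_1/2)=0\}$. Finiteness needs integrability at both ends: near $\zeta=0$ one has $\sin^{2k}(\zeta_1/2)\lesssim|\zeta|^{2k}$, so the integrand is $\lesssim|\zeta|^{-(d+\a-2)}$, integrable precisely because $\a<2$; near $\zeta=\infty$ the numerator is bounded, so the integrand is $\lesssim|\zeta|^{-(d+2k-2+\a)}$, integrable precisely because $2k-2+\a>0$, which is guaranteed by $k+\a>1$. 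Substituting back gives \eqref{moment-es} as an \emph{equality}, and in particular shows the defining integral of $\cM_k^\a$ is finite if and only if the $(2k-2+\a)$-moment of $F$ is.

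For the localized bound \eqref{uniform-moment-est} I repeat the computation but keep the $\xi$-integration restricted to $\{|\xi|\le1/R\}$. Under $\zeta=|v|\xi$ this region becomes $\{|\zeta|\le|v|/R\}$, so the inner integral now equals $c_{\a,d,|v|/R,k}\,|v|^{2k-2+\a}$. Because the integrand is nonnegative, $M\mapsto c_{\a,d,M,k}$ is nondecreasing; hence for $|v|\ge R$ we have $|v|/R\ge1$ and $c_{\a,d,|v|/R,k}\ge c_{\a,d,1,k}$. Dropping the nonnegative contribution of $\{|v|<R\}$ and integrating over $\{|v|\ge R\}$ yields
\[
\int_{\{|\xi|\le1/R\}}\frac{\Delta^k\vp(\xi)}{|\xi|^{d+2k-2+\a}}\,d\xi\ \ge\ c_{\a,d,1,k}\int_{\{|v|\ge R\}}|v|^{2k-2+\a}\,dF(v),
\]
which is exactly \eqref{uniform-moment-est}.

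Finally, for the set identity \eqref{equivalence-space}: if $F\in P_{2k-2+\a}$ then its moment is finite, so by the equality above the defining integral converges and $\vp=\cF(F)\in\cM_k^\a$; conversely, if $\vp\in\cM_k^\a$ then $\vp=\cF(F)$ for some $F\in P_0$ (as $\cK=\cF(P_0)$) whose $(2k-2+\a)$-moment is finite by the same equality. The only point requiring separate care is the normalization \eqref{mean} in the regime $2k-2+\a\ge1$: since $\Delta^k$ depends only on the symmetric (even) part of $\vp$, it is blind to the first-order behavior $\nabla\vp(0)=-i\int v\,dF(v)$, so the vanishing-mean condition is an independent normalization attached to the correspondence rather than deduced from the integral. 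I expect this reconciliation of the mean condition, together with the integrability bookkeeping that pins down the roles of $\a<2$ and $k+\a>1$ in the constant, to be the only genuinely delicate part of the argument; the interchange of integrals is essentially automatic.
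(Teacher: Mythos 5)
Your proof is correct and follows essentially the same route as the paper: the identity $\Delta^k\vp(\xi)=\int\sin^{2k}(v\cdot\xi/2)\,dF(v)$, Tonelli, the change of variables $\zeta=|v|\xi$ with rotational invariance, and the monotonicity of $M\mapsto c_{\a,d,M,k}$ to localize to $\{|v|\ge R\}$; your added checks of the finiteness and positivity of $c_{\a,d,M,k}$ (using $\a<2$ at the origin and $k+\a>1$ at infinity) make explicit what the paper leaves implicit. Your observation that the vanishing-mean normalization \eqref{mean} cannot be deduced from the $\Delta^k$ integral (which sees only $\re\vp$) and must be carried as a separate convention in \eqref{equivalence-space} is a genuine point that the paper's one-line justification of \eqref{equivalence-space} passes over.
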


\begin{proof}
Note 
\begin{align*}
\int_{\{|\xi| \le M/R\}} \frac{\Delta^k \varphi(\xi) }{|\xi|^{d+2k-2+ \alpha}} d \xi 
&= \int_{\RR_v^d} \Big(\int_{\{|\xi| \le M/R\}}\frac{\sin^{2k} (v\cdot \xi/2)}{|\xi|^{d+2k-2+\alpha}} d \xi \Big)
dF(v).
\end{align*}
By the change of variable $|v| \xi = \zeta$ and by using
the invariance of the rotation, we have
\begin{align*}
\int_{\{|\xi| \le M/R\}} \frac{\sin^{2k} (v\cdot \xi/2)}{|\xi|^{d+2k-2+\alpha}} d \xi
&=|v|^{2k-2+\alpha} \int_{\{|\zeta| \le M |v|/R\}} \frac{\sin^{2k} (\ve_1 \cdot \zeta/2)}{|\zeta|^{d+2k-2+\alpha}} d \zeta \\
& \ge |v|^{2k-2+\alpha} {\bf 1}_{\{|v| \ge R\}} c_{\alpha,d,M,k} \,,
\end{align*} 
which yields \eqref{uniform-moment-est}, with the choice of $M=1$. 
By letting  $M \rightarrow \infty$  and   $R \rightarrow 0$, we obtain  \eqref{moment-es}.  
The formula  \eqref{equivalence-space} is now obvious since
\begin{equation}\label{opposite}
\lim_{M \rightarrow \infty} \int_{\{|\xi| \le M \}} \frac{\Delta^k \varphi(\xi) }{|\xi|^{d+2k-2+ \alpha}} d \xi
\leq c_{\alpha,d,\infty ,k}\int |v|^{2k-2+\a} dF(v). 
\end{equation}
 
\end{proof}

We are now ready to prove Theorem \ref{complete}.

\begin{proof}[Proof of Theorem \ref{complete}]
Suppose that $\{\varphi_n \}_{n=1}^\infty \subset \cM_k^\a$ satisfies
\[
dis_{k,\a,\b} (\varphi_n, \varphi_m) \rightarrow 0 \enskip (n,m \rightarrow \infty) \enskip .
\]
Since it follows {from} Proposition 3.10 of \cite{Cannone-Karch-1} that $\cK^\b$ is a complete metric space,
we have the limit (pointwise convergence)
$$\varphi(\xi) = \lim_{n \rightarrow \infty } \varphi_n(\xi)  \in \cK^\b \subset \cK.
$$
For any fixed $R >1$ we have 
\[
\int_{\{R^{-1}\le |\xi| \le R\}}\frac{|\D^k\varphi_n (\xi)|}{|\xi|^{d+2k-2+\a}} d\xi \le \sup_{n} \|\varphi_n \|_{\cM_k^\a} < \infty.
\]
Taking the limit with respect to $n$ and letting $R \rightarrow \infty$, we have $\varphi \in \cM_k^\a$.
Now it is easy to see that  $dis_{k,\a,\b} (\varphi_n, \varphi) \rightarrow 0$.

Suppose that, for $F_n, F \in P_\a(\RR^d)$,  we have 
$$
\varphi_n = \cF(F_n), \varphi = \cF(F) \in \cM_k^\a,\enskip \mbox{and} \enskip 
\displaystyle \lim_{n \rightarrow \infty} dis_{k,\a,\b}(\varphi_n, \varphi) = 0\,.
$$ 
Note that for $R >1$
\[
\int_{\{|\xi| \le 1/R\}} \frac{\D^k\varphi_n (\xi)}{|\xi|^{d+\alpha}} d \xi  
\le \int_{\{|\xi| \le 1/R\}} \frac{\D^k\varphi(\xi)}{|\xi|^{d+\alpha}} d \xi  + \|\varphi_n -\varphi\|_{\cM_k^\a} \,. 
\]
It follows from \eqref{uniform-moment-est} that for any $\varepsilon >0$ there exist $R >1$ and $N \in \NN$ such that 
 \[  
\int_{\{|v| \ge R \}} |v|^{2k-2+\a} dF_n(v) +  \int_{\{|v| \ge R \}} |v|^{2k-2+\a} dF(v) < \varepsilon\, \enskip \mbox{if $n \ge N$}.
\]
This shows \eqref{measure-convergence} because $\varphi_n \rightarrow \varphi$ in $\cS'(\RR^d)$ and so
$F_n \rightarrow F$ in $\cS'(\RR^d)$. This completes the proof of the
theorem.
\end{proof}

\section{Proof of Theorem \ref{fourier-space} }\label{s-34}

The main purpose of this section concerns with the continuity of  solutions in the new classification of  characteristic functions.
We only need to prove  Theorem  \ref{fourier-space} because Theorem \ref{existence-base-space} follows by using Theorem \ref{complete}.

\begin{proof}
Since $k\ge2$, we have $\vp_0\in\cM_k^\a\subset\cK^2\subset \wt\cM^{\b_1}$, for some $\b_1\in(\b,2)$.
By Theorem 1.8 of \cite{MWY-2015}, we can obtain a unique classical solution $\vp(t,\xi)\in C([0,\infty),\wt\cM^{\b_1})$ 
corresponding to the initial data $\vp_0$.
Since $F_0\in P_{2k-2+\a}(\bR^3)$, by the Corollary 1.7 of 
\cite{MWY-2015}  and    \eqref{opposite} , it is easy to see that, for any $t\ge0$, $\vp(t,\xi)\in\cM_k^\a$. More precisely, there exists a constant $C_T>0$ such that
\begin{align}
\sup_{s \le \tau \le t} \|\vp(\tau) \|_{\cM^\alpha_k} \le C_T\enskip\mbox{ for all } s,t\in[0,T].
\end{align}

To complete the proof of Theorem  \ref{fourier-space},
it remains to prove \eqref{fourier-continuity} and \eqref{continuity-M}. 
The first estimate \eqref{fourier-continuity} is a direct consequence of the formula 
\begin{align*}
\vp(t,\xi) -\vp(s,\xi) 
= \int_s^t \int_{\SS^2}b\Big(\frac{\xi \cdot \sigma}{|\xi|}\Big) \Big( \vp(\tau, \xi^+)\vp (\tau, \xi^-) - \vp (\tau, \xi)
\Big) d\sigma d \tau,
\end{align*}
and Lemma 2.2 of \cite{morimoto-12}. The second one \eqref{continuity-M} follows {from} 
the following calculation which is a variant of Lemma 2.2 of \cite{morimoto-12} and Lemma 3.4 of \cite{MWY-2014}
(see also the proof of (1.23) in \cite{MWY-2015}).  

For any $k\ge2,\a\in[0,2)$, we have
\begin{align}
	&\int\frac{|\D^k\vp(\xi,t)-\D^k\vp(\xi,s)|}{|\xi|^{3+2k-2+\a}}d\xi \nonumber\\
	&=\int\frac{d\xi}{|\xi|^{3+2k-2+\a}}\Big|\sum_{j=1}^kc_{k,j}(\re\vp(j\xi,t)-\re\vp(j\xi,s))\Big|\nonumber\\
	&=\int\frac{d\xi}{|\xi|^{3+2k-2+\a}}\Big|\sum_{j=1}^kc_{k,j}\re\int_s^t\int_{\bS^2}b\Big(\frac{\xi\cdot\s}{|\xi|}\Big)\nonumber\\
	&\,\qquad\qquad\qquad\cdot\big(\vp(j\xi^+,\tau)\vp(j\xi^-,\tau)-\vp(j\xi,\tau)\big)d\s d\tau\Big|.\label{split-equation}
	\end{align}
	
    As in \cite{morimoto-12}, we put  $\zeta = \big(\xi^+ \cdot \frac{\xi}{|\xi|}\big) \frac{\xi}{|\xi|}$ and 
	consider $\tilde \xi^+ = \zeta -(\xi^+-\zeta)$, 
	which is
	symmetric to $\xi^+$ on $\SS^2$, see Figure 1.	
	\begin{figure}[bht]\label{f-2-3}
		\begin{center}
			\includegraphics[width=9cm]{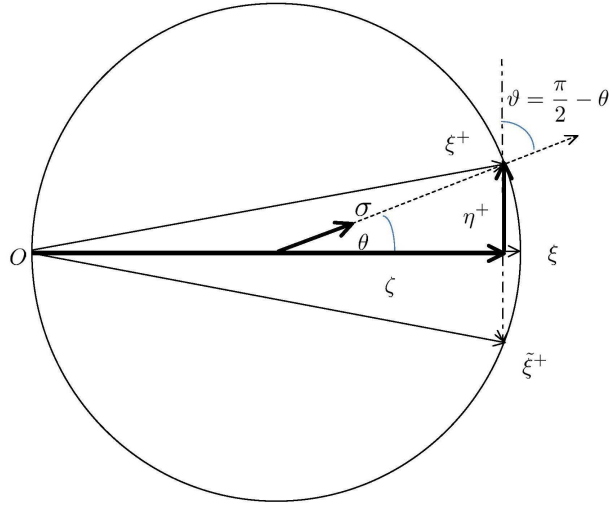}
			\caption{$\theta = \la \frac{\xi}{|\xi|} , \sigma \ra $, \enskip  $\vartheta = \la\frac{\eta^+}{|\eta^+|}, \sigma\ra$, $\eta^+ =\xi^+ -\zeta$. }
		\end{center}
	\end{figure}	
	Then, we split \eqref{split-equation} into three parts:
	\begin{align}
	I_{j,1}&=\frac12\int b\Big(\frac{\xi\cdot\s}{|\xi|}\Big)(\vp(j\xi^+)+\vp(j\tilde\xi^+)-2\vp(j\zeta))d\s,\\
	I_{j,2}&=\int b\Big(\frac{\xi\cdot\s}{|\xi|}\Big)(\vp(j\zeta)-\vp(j\xi))d\s,\\
	I_{j,3}&=\int b\Big(\frac{\xi\cdot\s}{|\xi|}\Big)\vp(j\xi^+)(\vp(j\xi^-)-1)d\s.
	\end{align}
Summing over $j$, we have 
	\begin{align*}
	\sum_{j=1}^kc_{k,j}\re I_{j,1}&=\frac12\int b\Big(\frac{\xi\cdot\s}{|\xi|}\Big)\int\Big(\sin^{2k}\frac{\xi^+\cdot v}{2}+\sin^{2k}\frac{\tilde\xi^+\cdot v}{2}\\
	&\quad -2\sin^{2k}\frac{\zeta\cdot v}{2}\Big)dF_\tau d\s,\\
	\sum_{j=1}^kc_{k,j}\re I_{j,2}&=\int b\Big(\frac{\xi\cdot\s}{|\xi|}\Big)\int\Big(\sin^{2k}\frac{\zeta\cdot v}{2}-\sin^{2k}\frac{\xi\cdot v}{2}\Big)dF_\tau d\s.
	\end{align*}	
	By a proper transformation to the variable $\xi$, that is , $\eta=|v|\xi$, and then use $\xi$ again to replace $\eta$, we have
	
	\begin{align*}
	&\int\frac{1}{|\xi|^{3+2k-2+\a}}\Big|\int_s^t \sum_{j=1}^kc_{k,j}\re I_{j,1}d\tau \Big|d\xi\\
	&\le\frac12\int_s^t\int\int_{\bS^2} b\Big(\frac{\xi\cdot\s}{|\xi|}\Big)\Big|\sin^{2k}\frac{\xi^+\cdot \bfe_1}{2}+\sin^{2k}\frac{\tilde\xi^+\cdot \bfe_1}{2}-2\sin^{2k}\frac{\zeta\cdot \bfe_1}{2}\Big|d\s d\xi\\
	&\qquad\cdot\int |v|^{2k-2+\a}dF_\tau d\tau.
	\end{align*}
	If we put $A = \zeta\cdot \bfe_1/2$ and $B = \eta^+ \cdot \bfe_1/2$,  ($ \eta^+ = \xi^+ - \zeta$), then 
\begin{align*}
	&\sin^{2k}\frac{\xi^+\cdot \bfe_1}{2}+\sin^{2k}\frac{\tilde\xi^+\cdot \bfe_1}{2}-2\sin^{2k}\frac{\zeta\cdot \bfe_1}{2}\\
&= (\sin A \cos B + \sin B \cos A)^{2k} +(\sin A \cos B - \sin B \cos A)^{2k}- 2\sin^{2k} A\\
& =  2\sin^{2k} A \Big ( (\cos^2 B)^k -1 \Big) \\
&\qquad \qquad + 2 \sum_{j=1}^k \left( \begin{array}{c}
2k\\
2j
\end{array}\right) \sin^{2j} B   \sin^{2k-2j}  A  \cos^{2k-2j} B \cos^{2j} A \,.
\end{align*}
Since 
	\begin{align*}
		\sin^2  \frac{\eta^+\cdot\bfe_1}{2} \lesssim|\xi|^2\sin^2\frac\theta2,
	\end{align*}
		we have
	\begin{align*}
		&\Big|\sin^{2k}\frac{\xi^+\cdot \bfe_1}{2}+\sin^{2k}\frac{\tilde\xi^+\cdot \bfe_1}{2}-2\sin^{2k}\frac{\zeta\cdot \bfe_1}{2}\Big|\\
		&\lesssim
		\min \Big\{|\xi|^2\sin^2\frac\theta2,\, 1\Big\}\cdot {\bf 1}_{\{|\xi|\ge1\}}+
		|\xi|^{2k}\sin^2\frac\theta2\cdot {\bf 1}_{\{|\xi|<1\}}.
	\end{align*}	
	For $|\xi|>1$,
	\begin{align*}
	&\int_{|\xi|>1}\frac{1}{|\xi|^{3+2k-2+\a}}\Big|\int_s^t \sum_{j=1}^kc_{k,j}\re I_{j,1}d\tau \Big|d\xi\\
	&\lesssim\int_{|\xi|>1}\frac{1}{|\xi|^{3+2k-2+\a}}\int_0^{\pi/2}b(\theta)\min\{|\xi|^2\sin^2(\theta/2),\, 1\}\sin\theta d\theta d\xi\\
	&\lesssim\int_{|\xi|>1}\frac{|\xi|^{\a_0}}{|\xi|^{3+2k-2+\a}}d\xi<\infty.
	\end{align*} 
	The case $|\xi|<1$ is easier. Hence, we proved
	\begin{align}\label{same-estimate}
	\int&\frac{1}{|\xi|^{3+2k-2+\a}}\Big|\int_s^t \sum_{j=1}^kc_{k,j}\re I_{j,1}d\tau \Big|d\xi\nonumber\\
	&\lesssim |t-s|\cdot\sup_{\tau\in[0,T]}\int|v|^{2k-2+\a}dF_\tau.
	\end{align}
	
	The second term $I_{j,2}$ is similar.
	Now let's consider the last term $I_{j,3}$. Recall that the solution conserves the momentum, i.e.
	\begin{align}\label{zeromomentum}
	\int vdF_t=0 \mbox{ for all } t\ge0.
	\end{align}
We have
	\begin{align*}
	\re I_{j,3}
	&=\int b\left(\frac{\xi\cdot\s}{|\xi|}\right)\re \Big(\vp(j\xi^+)(\vp(j\xi^-)-1)\Big)d\s\\
	&=\int b\left(\frac{\xi\cdot\s}{|\xi|}\right)\Big(\iint \re\Big( e^{-i(j\xi^+\cdot v)} (e^{-i(j\xi^-)\cdot w}-1)\Big) 
dF_\tau(v) dF_\tau(w)\Big)d\s  \\
	&=\int b\left(\frac{\xi\cdot\s}{|\xi|}\right)
\Big(\iint 
\big\{\cos(j(\xi^+\cdot v+\xi^-\cdot w))\\
&\qquad\qquad\qquad \qquad \qquad \qquad -\cos(j\xi^+\cdot v)\big\}
dF_\tau(v) dF_\tau(w)\Big)d\s. \\
\end{align*}
It follows from \eqref{expansion} that
\begin{align*}
	\sum_{j=1}^kc_{k,j}\re I_{j,3}&=\int b\Big(\frac{\xi\cdot\s}{|\xi|}\Big)\Big( \iint \Big(\sin^{2k}\frac {\xi^+\cdot v+\xi^-\cdot w}2\\
	&\quad-\sin^{2k}\frac{\xi^+\cdot v}2\Big)dF_\tau(v)dF_\tau(w)\Big)d\s.
	\end{align*}
	Denoting $x=\frac{\xi^+\cdot v}{2},y=\frac{\xi^-\cdot w}{2}$, we see
\begin{align*}
&\sin^{2k}\frac {\xi^+\cdot v+\xi^-\cdot w}2-\sin^{2k}\frac{\xi^+\cdot v}2\\
&\quad =\Big(\sin x \cos y + \sin y \cos x\Big)^{2k} - \sin^{2k} x\\
&\quad=2k \sin y \sin^{2k-1} x \cos x\\
&\qquad + 2k \sin y(\cos^{2k-1}y-1)\sin^{2k-1} x\cos x \\
&\qquad + \sum_{\ell =2}^{2k}\left(\begin{array}{c}2k\\
\ell\end{array}\right)  \sin^{\ell}y 
\cos^{2k-\ell}y \sin ^{2k-\ell} x \cos ^ \ell x\\
&\qquad +\sin^{2k} x(\cos^{2k}y-1)\\
&\quad = J_{3,1} + J_{3,2} + J_{3,3}+J_{3,4}.
\end{align*}
Since the solution conserves the momentum,
we obtain 
\begin{align*}
&\left|\iint J_{3,1} dF_\tau(v)dF_\tau(w)\right|\\
&= 2k \left|\int \sin^{2k-1}\frac{\xi^+\cdot v}{2}\cos \frac{\xi^+\cdot v}{2} dF_\tau(v) \int \Big( \sin \frac{\xi^-\cdot w}{2} -\frac{\xi^-\cdot w}{2} \Big) dF_\tau(w)\right|.
\end{align*}
Since $|z- \sin z| \lesssim |z| \min\{|z|, 1\}$, by the change of the variable $|v| \xi \rightarrow \zeta$, we have
\begin{align*}
&\int\frac{d\xi}{|\xi|^{3+2k-2+\a}}\int b\left(\frac{\xi\cdot\s}{|\xi|}\right)\left|\iint J_{3,1} dF_\tau(v)dF_\tau(w)\right|d\s\\
&\lesssim \int\frac{\min\{|\zeta|^{2k-1}, 1\}d\zeta}{|\zeta |^{3+2k-2+\a}}\int b\left(\cos \theta \right)\theta^{\max\{\alpha_0, 1\}} \sin \theta
d \theta\\
&\qquad \times 
\int |v|^{2k-2+\alpha} \Big( \int \Big(\frac{ |\zeta||w|}{|v|}\Big)^{\max\{\alpha_0, 1\}} dF_\tau(w)\Big) dF_\tau(v)\\
&\lesssim\int \la v\ra^{2k-2+\alpha}   dF_\tau(v) \int \la w \ra^2 dF_\tau(w).
\end{align*}
We have the same upper bound for the integrals corresponding to $J_{3,2}$ and $J_{3,4}$. 
Furthermore, if one use another change of variable $|w| \xi \rightarrow \zeta$ for terms with $\ell \ge k$, then 
\begin{align*}
&\int\frac{d\xi}{|\xi|^{3+2k-2+\a}}\int b\left(\frac{\xi\cdot\s}{|\xi|}\right)\left|\iint J_{3,3} dF_\tau(v)dF_\tau(w)\right|d\s\\
&\lesssim \int \frac{d\zeta}{|\zeta |^{3+2k-2+\a}}\int b\left(\cos \theta \right)\theta^{\alpha_0} \sin \theta
d \theta   \\
&\qquad \times\iint |v|^{2k-2+\a}\sum_{\ell=2}^{k-1}\Big( |\zeta|^{2k-\ell} {\bf 1}_{\{|\zeta| \le 	1\}} + {\bf 1}_{\{|\zeta| >1\}}\Big)\\
&\qquad \times \Big(\Big(\frac{|w||\zeta|}{|v|}\Big)^{\ell} {\bf 1}_{\{|\zeta| \le 	1\}} + \Big(\frac{|w||\zeta|}{|v|}\Big)^{\a_0} {\bf 1}_{\{|\zeta| >1\}}
\Big) dF_\tau(v)dF_\tau(w)\\
&\quad + \int \frac{d\zeta}{|\zeta |^{3+2k-2+\a}}\int b\left(\cos \theta \right)\theta^{\alpha_0} \sin \theta
d \theta \\
&\qquad \times \iint |w|^{2k-2+\a}  \sum_{\ell=k}^{2k}\Big( \Big(\frac{|v||\zeta|}{|w|}\Big)^{2k-\ell} {\bf 1}_{\{|\zeta| \le 	1\}} + {\bf 1}_{\{|\zeta| >1\}}\Big)\\
&\qquad \times  \Big(|\zeta|^{\ell} {\bf 1}_{\{|\zeta| \le 	1\}} + |\zeta|^{\a_0} {\bf 1}_{\{|\zeta| >1\}}
\Big)   dF_\tau(v)dF_\tau(w)  \\
&\lesssim \iint \Big(\la v \ra^{2k-2+\a}\la w \ra^2 + \la w \ra^{2k-2+\a}\la v\ra^2\Big)dF_\tau(v)dF_\tau(w) .
\end{align*}
Thus, we have similar  estimate as \eqref{same-estimate}, namely,
\begin{align*}
	\int\frac{1}{|\xi|^{3+2k-2+\a}}\Big|\int_s^t \sum_{j=1}^kc_{k,j}\re I_{j,3}d\tau \Big|d\xi
	\lesssim|t-s|\cdot\sup_{\tau\in[0,T]}\int\la w\ra^{2k-2+\a}dF_{\tau}(w).
	\end{align*}
\end{proof}

\section{Proof of Theorem  \ref{existence-base-space}}
The existence and uniqueness of the solution to the Cauchy problem \eqref{bol}-\eqref{initial} follow from Theorem \ref{fourier-space} and the smoothing effect is proved by the Corollary 1.10 of \cite{MWY-2015}. To finish the proof, it remains to show 
\begin{align*}
\cF^{-1} (\varphi(t)) = F_t = f(t,v) \in C((0, \infty); L^1_{2k-2+ \a}(\RR^3))
\end{align*}
if $b$ has a singularity \eqref{singular-cross}.  

Indeed, it follows from the smoothing effect that
for any $0 < t_0 < T < \infty$ and for any $N >0$, there exists a constant $C_{N,t_0,T}>0$ such that
\[
\sup_{t_0 \le \tau \le T} \int \la \xi \ra^N |\varphi(\tau, \xi)|^2 d \xi \le  C_{N, t_0, T}\,.
\]
Let $t_1 \in (t_0, T)$ and $\varepsilon >0$. Since $\varphi(t_1, \xi) \in \cM_k^\a$, there exists 
an $R= R_\varepsilon >1$ such that 
\[
\int_{|\xi| < 1/R}  \frac{\Delta^k \vp(t_1,\xi) }{|\xi|^{3+2k-2+\a}} d\xi <  c_{\a,3,1,k}\frac{\varepsilon}{2}
\]
and moreover it follows from \eqref{continuity-M} that there exists a $\delta >0$ such that
if $|t-t_1| < \delta$ then 
\[
\int_{|\xi| < 1/R}  \frac{\Delta^k \vp(t,\xi) }{|\xi|^{3+2k-2+\a}} d\xi <   c_{\a,3,1,k}\varepsilon.
\]
By means of \eqref{uniform-moment-est},  we have 
$\int_{\{|v| \ge R \}} |v|^{2k + \alpha-2} f(t,v) dv   < \varepsilon$ if $|t-t_1| < \delta$. 
Therefore, if $|t-t_1| <\delta$ then
\begin{align*}
&\int \la v \ra^{2k-2 + \a} |f(t,v) -f(t_1,v)|dv \\
&\quad < \la R \ra^{2k-2+\a} \int_{|v| <R} |f(t,v) -f(t_1,v)|dv
+ 4 \varepsilon . 
\end{align*}
On the other hand, for any $M >1$ we have
\begin{align*}
&\sup_{v} |f(t,v) -f(t_1,v)| \le \int |\vp(t, \xi) -\vp(t_1, \xi) | d\xi \\
& \le \Big(\int_{|\xi| \ge M } \la \xi \ra^{-4} d\xi \Big) ^{1/2} 
\Big( \int \la \xi \ra^4 |\vp (t, \xi) - \vp(t_1, \xi)|^2d \xi \Big)^{1/2}\\
&\quad \qquad + \frac{4 \pi M^3 }{3 }\sup_{|\xi| \le M}  |\vp (t, \xi) - \vp(t_1, \xi)|\,.
\end{align*}
We can conclude the continuity since $\varphi(t,\xi) \in C([0,\infty); \cK^2)$.

\bigskip

\noindent

\bigskip
\noindent
{\bf Acknowledgements.} 
The research of the first author was supported in part by National Research Foundation of Korea
Grant funded by the Korean Government (No. 20150301).
The research of the second  author was supported in part
by  Grant-in-Aid for Scientific Research No.25400160,
Japan Society for the Promotion of Science. The research of the fourth author was
supported in part by the General Research Fund of Hong Kong, CityU No. 11303614.


\end{document}